\title[Variations on a theme of Kasteleyn]{Variations on a theme of Kasteleyn, with application to the totally nonnegative Grassmannian}
\author{David E.\ Speyer}
\address{Department of Mathematics, University of Michigan, 
         Ann Arbor, MI 48109}
\theoremstyle{plain}
\newtheorem{theorem}{Theorem}
\newtheorem{conjecture}{Conjecture}[section]
\newtheorem{Proposition}[conjecture]{Proposition}
\newtheorem{corollary}[conjecture]{Corollary}
\theoremstyle{definition}
\theoremstyle{remark}
\newtheorem{remark}[conjecture]{Remark}
\newtheorem{Remark}[conjecture]{Remark}
\newtheorem*{remark*}{Remark}
\newcommand{\cross}{\mathrm{cross}}
\newcommand{\RR}{\mathbb{R}}
\newcommand{\DD}{\mathbb{D}}
\newcommand{\newword}[1]{\textbf{\emph{#1}}}
\newcommand{\Pf}{\mathrm{Pf}}
\begin{document}

\maketitle

\begin{abstract} We provide a short proof of a classical result of Kasteleyn, and prove several variants thereof. One of these results has become key in the parametrization of positroid varieties, and thus deserves the short direct proof which we provide.
\end{abstract}


Let $G$ be a finite graph. A \newword{perfect matching} of $G$ is a collection $M$ of edges of $G$ such that every vertex of $G$ is contained in precisely one edge of $M$. Perfect matchings are also called \newword{dimer configurations}. Kasteleyn~\cite{Kast} established the following:

\begin{theorem} \label{Thrm1}
Let $G$ be a planar bipartite graph with $N$ black vertices $b_1$, $b_2$, \dots, $b_N$ and $N$ white vertices $w_1$, $w_2$, \dots, $w_N$. There is a $N \times N$ matrix $K$ with $K_{ij} = \pm 1$ if there is an edge from $i$ to $j$, and $K_{ij}=0$ otherwise, such that $\det K$ is the number of perfect matchings of $G$.
\end{theorem}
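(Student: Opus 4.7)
My plan is to expand $\det K = \sum_\sigma \mathrm{sgn}(\sigma) \prod_i K_{i,\sigma(i)}$ and observe that each nonzero term corresponds to a perfect matching: $\sigma$ contributes iff $b_i$ and $w_{\sigma(i)}$ are joined by an edge for every $i$, in which case that term is $\pm 1$. If we can choose the signs of $K$ so that every matching contributes with the same sign, then $|\det K|$ equals the number of matchings, and after possibly multiplying one row of $K$ by $-1$, so does $\det K$ itself.

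To turn ``all matchings have the same sign'' into a local condition, I would compare two matchings $M_0$ and $M_1$ via their symmetric difference, which is a vertex-disjoint union of simple even cycles $C_1, \ldots, C_r$ alternating between $M_0$- and $M_1$-edges. The ratio of their contributions to $\det K$ factors cycle by cycle: a cycle of length $2\ell_j$ produces an $\ell_j$-cycle in the permutation sending $M_0$ to $M_1$, giving a sign factor $(-1)^{\ell_j - 1}$, while the edge-sign ratio along $C_j$ equals $\prod_{e \in C_j} K_e$ because $K_e = \pm 1$. Hence all matchings carry a common sign precisely when $\prod_{e \in C} K_e = (-1)^{\ell - 1}$ for every simple even cycle $C$ of length $2 \ell$ that arises in some such symmetric difference.

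Next I would use planarity to reduce this cycle condition to a condition on each bounded face of $G$. A simple cycle $C$ bounds a disk tiled by some bounded faces $F_1, \ldots, F_m$ of $G$; edges strictly inside the disk lie in two face boundaries and cancel, so the product of signs around $C$ equals the product over $i$ of the signs around $\partial F_i$. Combined with Euler's formula $V_{\mathrm{int}} - E_{\mathrm{int}} + m = 1$ for the closed disk, a direct parity count shows that if every bounded face $F$ of length $2k$ satisfies $\prod_{e \in \partial F} K_e = (-1)^{k-1}$, then $\prod_{e \in C} K_e = (-1)^{\ell - 1}$ whenever $C$ encloses an even number of vertices of $G$. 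Bipartiteness is essential here in forcing every face length to be even, and the parity condition on interior vertices is automatic for cycles coming from $M_0 \triangle M_1$ because those interior vertices must be matched internally by both $M_0$ and $M_1$.

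Finally, I would construct a signing satisfying the face condition by induction on the number of bounded faces. If there are none, $G$ is a forest and any signing works; otherwise planarity provides an edge $e$ shared by the outer face and some bounded face $F$, and deleting $e$ merges $F$ into the outer face, strictly decreasing the face count. Induction then gives a valid signing of $G \setminus e$, and the sign of $e$ is uniquely forced by the parity requirement on $F$, with all other face conditions unchanged. The main obstacle I anticipate is the Euler-formula bookkeeping in the previous step: one must check carefully how half-face-lengths, interior edges, and interior vertices combine to yield exactly $(-1)^{\ell - 1}$, and it is precisely here that bipartiteness together with the matching origin of the cycle enter in an essential way.
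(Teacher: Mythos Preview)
Your argument is the classical Kasteleyn route---reduce the ``all matchings share a sign'' requirement to the face condition $\prod_{e\in\partial F}K_e=(-1)^{k-1}$ for each bounded face of length $2k$, then build such a signing inductively by peeling off one boundary edge at a time---and it is correct as sketched, including your observation that any cycle in $M_0\triangle M_1$ encloses an even number of vertices (since interior vertices must be matched among themselves). The paper, however, takes a genuinely different path: it proves a stronger statement for arbitrary straight-line immersions $\phi:G\to\RR^2$ (allowing crossings), namely that one can choose signs so that each matching $M$ contributes $(-1)^{\cross(M)}$. The base case places black vertices on one line and white on a parallel line, where $\cross(M)$ is the inversion count of the associated permutation and $K_{ij}\equiv 1$ works; the general case is reached by continuously deforming the immersion through a connected configuration space and flipping the sign of one entry each time a vertex passes through an edge. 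Planarity enters only at the end, via F\'ary's theorem, to guarantee a crossing-free drawing. Your approach is more constructive and yields an explicit algorithm for the signing; the paper's approach sidesteps the Euler-formula parity bookkeeping entirely, treats the bipartite and Pfaffian cases uniformly, and extends with essentially no extra work to the boundary versions (Theorems~\ref{Thrm2} and~\ref{Thrm5}).
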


We will define a \newword{graph with boundary} to be a finite graph $G$ with a specified subset $\partial G$ of the vertices of $G$, equipped with a circular ordering. We call the vertices of $\partial G$ the \newword{boundary vertices} and the other vertices of $G$ the \newword{internal vertices}. We define a \newword{perfect matching of a graph with boundary} to be a collection $M$ of edges of $G$ which contains each internal vertex precisely once, and each boundary vertex at most once. 
For a perfect matching $M$, we define $\partial M$ to be the set of boundary vertices contained in $M$.
For a subset $I$ of $\partial G$, we define $\DD(G, I)$ to be the number of perfect matchings $M$ of $I$ with $\partial M = I$.
We define a \newword{graph with boundary to be embedded} in a disc if $G$ is embedded in a closed planar disc $D$ such that the the vertices of $\partial G$ lie on $\partial D$, in their specified order.

The first variant of Kasteleyn's result which we want is:
\begin{theorem} \label{Thrm2}
Let $G$ be a bipartite graph with boundary embedded in a disk, such that all of the boundary vertices are white. Let there be $N+k$ black vertices $b_1$, $b_2$, \dots, $b_{N+k}$, let there be $N$ internal white vertices $w_1$, $w_2$, \dots, $w_N$ and let there be $n$ boundary vertices $w_{N+1}$, \dots, $w_{N+n}$, with that circular order. Then there is an $(N+k) \times (N+n)$ matix $K$ with $K_{ij} = \pm 1$ if there is an edge from $i$ to $j$, and $K_{ij}=0$ otherwise, having the following property: For any $k$ element subset $I$ of $\partial G$, let $K_I$ be the submatrix of $K$ using all rows, the first $N$ columns, and the additional $k$ columns indexed by $I$. Then $\DD(G, I) = \det K_I$. 
\end{theorem}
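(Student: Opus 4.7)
The plan is to mimic Kasteleyn's classical sign trick, adapted to a disk with boundary. First I would choose signs $\epsilon_e \in \{\pm 1\}$ on the edges of $G$ so that for every interior face $F$ of the planar embedding having $2m$ sides, the product of the $\epsilon_e$'s around $\partial F$ equals $(-1)^{m-1}$; such signs exist by the standard construction for planar graphs (pick a spanning tree, set those edges to $+1$, and propagate signs to non-tree edges by peeling off faces one at a time). Then set $K_{ij} = \epsilon_{b_i w_j}$ if the edge $b_i w_j$ exists and $0$ otherwise.

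The determinantal expansion of $\det K_I$ has exactly one nonzero term per perfect matching $M$ of $G$ with $\partial M=I$, contributing $\operatorname{sgn}(\sigma_M)\prod_{e \in M}\epsilon_e$. The core claim is that all such contributions share a common sign. Comparing any two matchings $M_1,M_2$ with $\partial M_1=\partial M_2=I$, their symmetric difference decomposes into alternating cycles $C$ of even length $2\ell$, and the ratio of the two signed contributions factors as $\prod_C\bigl[(-1)^{\ell-1}\prod_{e \in C}\epsilon_e\bigr]$: the first factor is the sign of the associated $\ell$-cycle in the induced permutation of white vertices, and the second comes from swapping $M_1 \cap C$ for $M_2 \cap C$ (using $\epsilon_e^2=1$). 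So it suffices to prove $\prod_{e \in C}\epsilon_e = (-1)^{\ell-1}$ for every such cycle $C$.

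For this I would apply Euler's formula to the region $R$ enclosed by $C$ in the planar sense (the bounded component of $\RR^2 \setminus C$). Because $C \subseteq D$, the Jordan--Schoenflies theorem produces $R$ as a topological open disk contained in the interior of $D$; in particular $R$ contains no boundary vertices of $G$. Multiplying the face conditions over the interior faces of $G$ lying inside $R$ and using $V-E+F=1$ for $R$ yields $\prod_{e \in C}\epsilon_e = (-1)^{\ell+p-1}$, where $p$ counts $G$-vertices strictly inside $R$. These are all interior vertices of $G$, matched in pairs by $M_1$ through edges that cannot cross $C$ by planarity, so $p$ is even and the identity holds. The step I expect to require the most care is handling cycles $C$ that touch $\partial D$, since then neither component of $D \setminus C$ is a disk of the form required by Euler; the workaround of using the planar-interior region $R$ (which automatically avoids $\partial D$ in its interior) deals with these cases uniformly. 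A final cosmetic step adjusting signs on the boundary-indexed columns of $K$, which preserves the face conditions, normalizes the common sign to $+1$ for all $I$.
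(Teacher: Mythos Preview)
Your approach is the classical Kasteleyn argument via face sign conditions and Euler's formula, which is genuinely different from the paper's proof. The paper instead takes any planar immersion $\phi$, defines $\epsilon(M)=(-1)^{\cross(M)}$ from the crossing number, and shows there is always a $\pm 1$ matrix $K$ with $\det K_I = \sum_{\partial M = I} \epsilon(M)$. This is verified first for one explicit immersion (black vertices in order on one side of a rectangle, white on the other) where all $K_{ij}=1$ works, and then one deforms the immersion to the given straight-line embedding, flipping one matrix entry each time a vertex passes through an edge. At the planar embedding there are no crossings, so $\epsilon\equiv 1$ and the conclusion follows for all $I$ simultaneously.

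Your argument that $\det K_I = s_I\cdot \DD(G,I)$ for a well-defined sign $s_I$ is correct, including the handling of cycles that meet $\partial D$. The gap is the last sentence. Flipping boundary column $j$ replaces $s_I$ by $-s_I$ exactly when $j\in I$, and a global row flip negates all $s_I$; so the sign patterns you can reach are precisely those of the form $s_0\prod_{j\in I}\tau_j$. But you have $\binom{n}{k}$ signs and only $n+1$ degrees of freedom, so one must verify the compatibility relations, e.g.\ $s_{I_1}s_{I_2}s_{I_3}s_{I_4}=1$ whenever each boundary vertex lies in an even number of the $I_i$. That verification is not cosmetic: it requires comparing matchings with \emph{different} boundaries, whose symmetric difference contains alternating paths between boundary vertices rather than only cycles, and your Euler--Jordan argument as written handles only cycles. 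One can push it through by closing each such path along an arc of $\partial D$ and tracking the resulting sign, but that is exactly the extra work you have omitted. The paper's deformation argument sidesteps this issue entirely, since the base immersion gives the correct sign for every $I$ at once.
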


This will then imply
\begin{theorem} \label{Thrm3}
Let $G$ be as in Theorem~\ref{Thrm2}. Then there is a $k \times n$ real matrix $L$ with the property we now describe: For any $k$ element subset $I$ of $\partial G$, let $L_I$ be the submatrix of $L$ using all rows and the columns indexed by $I$. Then $\DD(G, I) = \det L_I$. 
\end{theorem}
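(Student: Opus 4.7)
The plan is to deduce Theorem~\ref{Thrm3} from Theorem~\ref{Thrm2} by a Gaussian elimination argument. Write the matrix $K$ from Theorem~\ref{Thrm2} in block form $K = [\,K_{\mathrm{int}} \mid K_{\mathrm{bdy}}\,]$, where $K_{\mathrm{int}}$ is the $(N+k) \times N$ block whose columns correspond to the $N$ internal white vertices and $K_{\mathrm{bdy}}$ is the $(N+k) \times n$ block whose columns correspond to the $n$ boundary white vertices. Then for every $k$-element subset $I$ of $\partial G$, the matrix $K_I$ appearing in Theorem~\ref{Thrm2} is exactly $[\,K_{\mathrm{int}} \mid K_{\mathrm{bdy}}[:,I]\,]$.

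First I would dispense with the degenerate case: if $\mathrm{rank}(K_{\mathrm{int}}) < N$, then the first $N$ columns of every $K_I$ are linearly dependent, so $\det K_I = 0$; by Theorem~\ref{Thrm2} we get $\DD(G, I) = 0$ for every $I$, and $L = 0$ does the job. Otherwise $K_{\mathrm{int}}$ has full column rank $N$, and standard row reduction produces an invertible $(N+k) \times (N+k)$ matrix $A$ such that
$$AK \;=\; \begin{pmatrix} I_N & M \\ 0 & L \end{pmatrix}$$
for some $N \times n$ matrix $M$ and some $k \times n$ real matrix $L$. After rescaling a single row of $A$ (which only rescales the corresponding row of $L$) we may assume $\det A = 1$. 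The matrix $L$ will be the required matrix: for any $k$-subset $I$ of $\partial G$,
$$AK_I \;=\; \begin{pmatrix} I_N & M_I \\ 0 & L_I \end{pmatrix},$$
so expanding by blocks gives $\det K_I = \det(AK_I) = \det L_I$, and Theorem~\ref{Thrm2} then yields $\DD(G, I) = \det L_I$.

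There is essentially no hard step: the content of Theorem~\ref{Thrm3} relative to Theorem~\ref{Thrm2} is simply that the family of maximal minors of $K$ which always use the first $N$ columns is already determined by the maximal minors of a $k \times n$ matrix, and this compression is accomplished by killing the internal-vertex columns via row operations. The only care needed is the rank dichotomy above and keeping track of the determinant of the row-operation matrix so that the factor relating $\det K_I$ and $\det L_I$ is $1$.
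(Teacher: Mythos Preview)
Your argument is correct and is essentially the paper's own proof: row-reduce $K$ to the block form $\left(\begin{smallmatrix} \mathrm{Id}_N & * \\ 0 & L \end{smallmatrix}\right)$ and read off $\det K_I = \det L_I$. The only cosmetic difference is that the paper phrases the degenerate case as ``$G$ has no perfect matchings'' rather than ``$\mathrm{rank}(K_{\mathrm{int}})<N$,'' and leaves the $\det A = 1$ normalization implicit by saying the row operations do not change maximal minors.
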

In particular, all maximal minors of $L_I$ are nonnegative. 
As we will point out explicitly in Corollary~\ref{In Grassmannian}, that means that the $\binom{n}{k}$ numbers $\DD(G,I)$ are the Pl\"ucker coordinates of a point in Postnikov's \newword{totally nonnegative Grassmannian}.

Kasteleyn also proved a version of his theorem for graphs which are not bipartite:
\begin{theorem} \label{Thrm4}
Let $G$ be a planar graph with $N$ vertices $v_1$, $v_2$, \dots, $v_{N}$. Then there is an $N \times N$ skew symmetric matrix $X$ with $X_{ij} = - X_{ji} =  \pm 1$ if there is an edge from $i$ to $j$, and $X_{ij}=0$ otherwise, such that the Pfaffian $\Pf(X)$ is the number of perfect matchings of $G$.
\end{theorem}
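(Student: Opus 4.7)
The argument follows Kasteleyn's original strategy and splits into two independent parts.

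Part 1 constructs a \emph{Pfaffian orientation} of $G$: an orientation of each edge (encoded by $X_{ij} = +1$ and $X_{ji} = -1$ if $i \to j$) such that for every bounded face $F$, the number of edges of $\partial F$ directed clockwise around $F$ is odd. I would prove existence by induction on the number of bounded faces, peeling off an edge $e$ that lies on both the unbounded face and on exactly one bounded face $F_e$: orient $G \setminus e$ by the inductive hypothesis, then choose the orientation of $e$ so as to give $F_e$ the correct parity. Minor technicalities (bridges, disconnection upon removal) are handled by first decomposing $G$ into $2$-edge-connected blocks.

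Part 2 is the sign analysis. Expand $\Pf(X) = \sum_M \epsilon(M) \prod_{\{i,j\} \in M,\, i<j} X_{ij}$ as a sum over perfect matchings $M$, and argue all summands share a common sign. Fix a reference matching $M_0$; for another matching $M$, the symmetric difference $M \triangle M_0$ decomposes into vertex-disjoint even simple cycles $C_1, \ldots, C_r$ in $G$. The ratio of the $M$-term to the $M_0$-term factors across the $C_k$, and each cycle's contribution is controlled by the number $c(C_k)$ of its edges directed clockwise around $C_k$ together with the number $v(C_k)$ of vertices of $G$ lying strictly inside $C_k$ (those vertices are matched among themselves by both $M$ and $M_0$, since $M$ and $M_0$ agree outside the cycles).

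The key combinatorial lemma is: in a Pfaffian orientation, every simple even cycle $C$ in $G$ satisfies $c(C) \equiv v(C) + 1 \pmod{2}$. This is obtained by summing the face-parity conditions over the $f(C)$ bounded faces enclosed by $C$: each interior edge contributes $1$ to the parity sum (it is clockwise with respect to exactly one of its two adjacent faces), and each edge of $C$ contributes $1$ precisely when it is clockwise around $C$. Applying Euler's formula to the closed disc bounded by $C$ rewrites the interior-edge count as $v(C) + f(C) - 1$, giving $f(C) \equiv v(C) + f(C) - 1 + c(C) \pmod 2$, i.e., $c(C) \equiv v(C) + 1 \pmod 2$. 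I expect the main obstacle to be not this lemma itself but the matching Pfaffian-sign bookkeeping in Part 2: one must carefully relate $\epsilon(M)/\epsilon(M_0)$ to a chosen cyclic order on each $C_k$ and to the inversions produced by the interior vertices of $C_k$, so that the $v(C_k)+1$ in the lemma cancels exactly, leaving each cycle to contribute a uniform $\pm 1$ to the ratio.
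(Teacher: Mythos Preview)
Your proposal is essentially correct: it is the classical Kasteleyn argument via Pfaffian orientations and the face-parity lemma, and with the bookkeeping you flag in Part~2 carried out carefully it yields the theorem. (One small point to watch in your Euler-formula step: the identity $e_{\mathrm{int}} = v(C) + f(C) - 1$ assumes the subgraph enclosed by $C$ is connected; this is easily repaired, but should be said.)

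However, the paper takes a genuinely different route. Rather than constructing an orientation face-by-face and then analyzing alternating cycles, the paper proves a statement valid for an \emph{arbitrary} straight-line immersion $\phi:G\to\RR^2$ (not an embedding): there is always a sign matrix $X$ with $\Pf(X)=\sum_M (-1)^{\cross(M)}$, where $\cross(M)$ counts crossing pairs among the segments of $M$. The proof is a deformation argument. One starts from the configuration with all vertices on a circle, where $(-1)^{\cross(M)}$ is \emph{by definition} the Pfaffian sign, so $X_{ij}\equiv 1$ works. One then moves the vertices continuously to the target immersion; the only events are a vertex $v$ passing through a non-incident edge $e$, and at such an event $(-1)^{\cross(M)}$ flips exactly when $e\in M$, so negating the single entry $X_e$ preserves the identity. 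Applying this to a crossing-free (F\'ary) drawing of a planar $G$ makes every $(-1)^{\cross(M)}$ equal to $1$ and recovers the theorem.

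The trade-offs: your approach is constructive and gives an explicit recipe for the signs from a planar embedding, at the cost of the cycle/Euler bookkeeping you anticipate. The paper's approach avoids that bookkeeping entirely, proves a more general statement (for immersions, not just embeddings), and adapts with almost no change to the boundary versions (Theorems~\ref{Thrm2} and~\ref{Thrm5}) simply by constraining where the boundary vertices may sit during the deformation.
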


We will prove this and the corresponding results:
\begin{theorem}\label{Thrm5}
Let $G$ be a planar graph with boundary embedded in a disc, having $N$ internal vertices $v_1$, $v_2$, \dots, $v_N$ and $n$ boundary vertices $v_{N+1}$, $v_{N+2}$, \dots, $v_{N+n}$ in that circular order. Then there is an $(N+n) \times (N+n)$ skew symmetric matrix $X$ with $X_{ij} = - X_{ji} =  \pm 1$ if there is an edge from $i$ to $j$, and $X_{ij}=0$ otherwise having the following property: For any subset $I$ of $\partial G$, let $X_I$ be the submatrix of $X$ using the first $N$ rows and first $N$ columns, and additionally those rows and columns indexed by $I$. Then $\DD(G, I) = \Pf(X_I)$.
\end{theorem}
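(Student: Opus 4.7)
The plan is to deduce Theorem~\ref{Thrm5} from Theorem~\ref{Thrm4} by restriction. For $I \subseteq \partial G$, let $J = \partial G \setminus I$ and let $G_I$ be the subgraph of $G$ obtained by deleting the vertices of $J$ together with their incident edges; perfect matchings of $G$ with boundary $I$ are precisely perfect matchings of $G_I$, so $\DD(G,I)$ equals the number of perfect matchings of $G_I$. My aim is to choose a single skew-symmetric $\pm 1$ matrix $X$ so that, for every $I$, the principal submatrix $X_I$ is automatically a Kasteleyn matrix for $G_I$.

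Viewing $D$ as a hemisphere of a sphere $S$ realizes $G$ as a sphere-planar graph. Call a bounded face of the disc embedding an \emph{interior face} if its topological boundary contains no arc of $\partial D$ (it may still touch $\partial D$ at isolated boundary vertices); these are exactly the bounded faces of the sphere embedding, while all remaining disc-faces merge with the exterior of $D$ through arcs of $\partial D$ into one outer face. Using the standard dual-tree argument, orient the edges of $G$ so that each interior face has an odd number of edges traversed clockwise as we go counterclockwise around it, and define $X_{ij}=+1$ if the edge is oriented $i\to j$, $X_{ji}=-1$, and $X_{ij}=0$ if there is no edge.

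The main structural observation is that deleting a boundary vertex $v \in J$ together with its incident edges only affects faces that have $v$ on their boundary: the sectors at $v$ merge into one region of $D\setminus G_I$ containing $v$ in its interior, and since $v$ lies on $\partial D$ this region acquires an arc of $\partial D$ on its topological boundary and so is part of the outer face of $G_I$. Hence the interior faces of $G_I$ form a subset of the interior faces of $G$, with the same orientations and the same (odd) clockwise counts, so the restricted orientation is still Kasteleyn. Applying Theorem~\ref{Thrm4} to $G_I$ then gives $\Pf(X_I) = \pm\, \DD(G,I)$ for every $I$.

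The main obstacle I foresee is pinning down that this $\pm$ equals $+1$ uniformly for every $I$. The sign can depend on $I$ both through the indexing of the submatrix in the Pfaffian expansion and through the orientations of edges at boundary vertices in $I$; my plan is to compute it on a single reference matching per $I$ and then absorb any residual discrepancy using the gauge freedom $X \mapsto DXD$ with $D=\mathrm{diag}(\pm 1)$. Conjugating by a $-1$ at an internal index negates $\Pf(X_I)$ for every $I$, whereas conjugating by a $-1$ at a boundary index $v$ negates $\Pf(X_I)$ precisely when $v \in I$, which should give enough flexibility to make all the signs match simultaneously. This bookkeeping should be parallel to the corresponding sign analysis in Theorem~\ref{Thrm2}.
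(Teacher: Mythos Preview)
Your approach is genuinely different from the paper's and the restriction idea is sound, but the sign step is a real gap, not just bookkeeping.

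\medskip

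\textbf{What is correct.} The observation that deleting boundary vertices can only merge faces that touch $\partial D$ into the outer face, so that the interior faces of $G_I$ form a subset of the interior faces of $G$ with unchanged clockwise counts, is correct and is the right structural lemma for this route. It does show that your fixed orientation restricts to a Kasteleyn orientation on every $G_I$, hence $\Pf(X_I)=\pm\,\DD(G,I)$.

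\medskip

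\textbf{Where the gap is.} The gauge freedom you invoke gives only an $(n{+}1)$--dimensional family of sign adjustments over $\mathbb{F}_2$: flipping at a boundary vertex $v$ multiplies $s(I)$ by $(-1)^{[v\in I]}$, and flipping at an internal vertex gives a global sign. But the unknown sign function $s$ lives, a priori, in a space of dimension $|\{I:\DD(G,I)>0\}|$, which can be exponential in $n$. So ``enough flexibility'' is exactly what must be proved: you need to show that $s(I)$ is an \emph{affine-linear} function of the indicator of $I$, i.e.\ $s(I)=\epsilon\cdot(-1)^{|I\cap T|}$ for some fixed $T\subseteq\partial G$ and $\epsilon\in\{\pm1\}$. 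That statement is equivalent to a uniqueness-up-to-gauge assertion for Kasteleyn orientations on a disc, and it requires an argument (for instance, comparing two matchings $M,M'$ with $\partial M=I$, $\partial M'=I'$ via the paths and cycles in $M\triangle M'$ and tracking how boundary endpoints contribute). None of this is carried out. Also, your appeal to ``the corresponding sign analysis in Theorem~\ref{Thrm2}'' does not help here: the paper does not prove Theorem~\ref{Thrm2} by face orientations, so there is no parallel analysis to borrow.

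\medskip

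\textbf{How the paper avoids this.} The paper's argument is a deformation of immersions rather than a face-orientation argument. One starts with all $N+n$ vertices on a convex curve in the order $v_1,\dots,v_{N+n}$ (so the boundary vertices are automatically in their prescribed cyclic order), takes $X_{ij}=+1$ for $i<j$, and uses that Pfaffian signs are crossing numbers; this makes $\Pf(X_I)=\sum_{M:\partial M=I}\epsilon(M)$ hold for \emph{every} $I$ simultaneously at the starting immersion. Deforming the immersion inside a convex region, each time a vertex crosses an edge one flips the sign of that single edge, and this single flip updates the identity for all $I$ at once. Arriving at a straight-line embedding in the disc, all $\epsilon(M)=1$, so $\Pf(X_I)=\DD(G,I)$ for all $I$ with one matrix $X$. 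The uniformity across $I$ is thus built in from the start, which is precisely the step your argument is missing.
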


\begin{theorem}\label{Thrm6}
Let $G$ be as in Theorem~\ref{Thrm5} and assume $\DD(G, \emptyset)>0$ (which implies that $N$ is even). Then there is an $n \times n$ skew symmetric real matrix $Y$ with the following property: For any subset $I$ of $\partial G$, let $Y_I$ be the submatrix using the rows and columns indexed by $I$. Then $\DD(G, I) = \Pf(Y_I) \  \DD(G, \emptyset)$ for all subsets $I$ of $\partial G$. 
\end{theorem}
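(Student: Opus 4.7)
\medskip

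\noindent\textbf{Proof plan for Theorem~\ref{Thrm6}.} My plan is to obtain $Y$ from the matrix $X$ of Theorem~\ref{Thrm5} by a Pfaffian Schur-complement construction. Write
\[
X = \begin{pmatrix} A & B \\ -B^T & D \end{pmatrix},
\]
where $A$ is the $N \times N$ skew-symmetric block indexed by the internal vertices $v_1,\dots,v_N$, $D$ is the $n \times n$ skew-symmetric block indexed by the boundary vertices $v_{N+1},\dots,v_{N+n}$, and $B$ is the $N \times n$ off-diagonal block. Applying Theorem~\ref{Thrm5} with $I = \emptyset$ gives $\Pf(A) = \DD(G,\emptyset)$, which is positive by hypothesis; in particular $A$ is invertible. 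I would then define
\[
Y := D + B^T A^{-1} B,
\]
which is skew-symmetric since $A^{-T} = -A^{-1}$.

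The computational heart of the argument is the Pfaffian analogue of the Schur complement identity, namely that for any skew-symmetric block matrix of the above form with $A$ invertible,
\[
\Pf\!\begin{pmatrix} A & B \\ -B^T & D \end{pmatrix} = \Pf(A)\,\Pf\bigl(D + B^T A^{-1} B\bigr).
\]
I would prove this by writing down the congruence
\[
M^T X M = \begin{pmatrix} A & 0 \\ 0 & D + B^T A^{-1} B \end{pmatrix}, \qquad M := \begin{pmatrix} I & -A^{-1}B \\ 0 & I \end{pmatrix},
\]
and combining the general transformation rule $\Pf(M^T X M) = \det(M)\,\Pf(X)$ with $\det M = 1$ and the block-diagonal multiplicativity of the Pfaffian.

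To conclude, I apply this identity not just to $X$ but to each submatrix $X_I$ for $I \subseteq \partial G$. The crucial bookkeeping point is that $X_I$ is itself of the block form $\begin{pmatrix} A & B_I \\ -B_I^T & D_I \end{pmatrix}$, where $B_I$ and $D_I$ are obtained from $B$ and $D$ by restricting to the columns/rows indexed by $I$; and $D_I + B_I^T A^{-1} B_I$ is exactly the principal submatrix $Y_I$ of $Y$. Thus the Schur-complement identity yields $\Pf(X_I) = \Pf(A)\,\Pf(Y_I)$, which combined with Theorem~\ref{Thrm5} gives the claimed $\DD(G,I) = \DD(G,\emptyset)\,\Pf(Y_I)$.

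The only place I expect to be careful is the proof of the Pfaffian Schur-complement identity itself: the signs and transpose conventions in passing from $A^{-T} = -A^{-1}$ to the symmetric-looking correction $B^T A^{-1} B$ are easy to get wrong. Once that identity is in hand, the restriction to arbitrary $I \subseteq \partial G$ is essentially automatic because the internal block $A$ is common to every $X_I$, so the Schur complement of $A$ in $X_I$ is simply the principal submatrix $Y_I$ of the Schur complement of $A$ in $X$.
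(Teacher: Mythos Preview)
Your proposal is correct and follows essentially the same approach as the paper: block-diagonalize $X$ by a congruence that only touches the internal block, then use $\Pf(M^T X M)=\det(M)\Pf(X)$ together with the observation that the internal block $A$ is common to every $X_I$, so the same congruence restricts compatibly. The only cosmetic difference is that the paper first normalizes $A$ to the standard form $J_N$ via some $S$ and then clears the off-diagonal by further symmetric row operations, picking up a factor $\det S$, whereas you do the block-diagonalization in one step with the explicit Schur complement $Y = D + B^T A^{-1} B$ and $\det M = 1$; your version is slightly more direct and yields a closed formula for $Y$, but the underlying linear algebra is identical.
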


\begin{remark*}
We take the Pfaffian of an odd by odd skew symmetric matrix to be zero, so the theorems are true but trivial in the cases that involve such Pfaffians.
\end{remark*}

\begin{remark*}
Theorem~\ref{Thrm6} shows that there are many skew-symmetric matrices all of whose Pfaffian's are nonnegative; it would be interesting to develop analogues of classical results on nonnegative matrices for Pfaffians.
\end{remark*}

\begin{remark*}
Theorems~3.1 and~4.1 of~\cite{Kuo2} are Theorems~\ref{Thrm6} and~\ref{Thrm3} with the added hypotheses (respectively) that $\DD(G, \emptyset)=1$ and that $\DD(G,I_0)=1$ for some $I_0$.
\end{remark*}

All of these theorems have easy variants where there are weights on the edges of $G$. Specifically, let $w: \mathrm{Edges}(G) \to \RR_{>0}$ be any weighting function. For a perfect matching $M$, we define $w(M) = \prod_{e \in M} w(e)$; we define $\DD(G,I,w) = \sum_{\partial M = I} w(M)$. Then the corresponding results are
\begin{enumerate}
\item In the setting of Theorem~\ref{Thrm1}, there is a matrix $K$ with $K_{ij} = \pm w((b_i,w_j))$ if $(b_i,w_j)$ is an edge, and $0$ otherwise, such that $\det K = \sum_M w(M)$.
\item In the setting of Theorem~\ref{Thrm2}, there is a matrix $K$ with $K_{ij} = \pm w((b_i,w_j))$ or $0$ as above such that $\det K_I = \DD(G,I,w)$.
\item In the setting of Theorem~\ref{Thrm3}, there is a real matrix $L$ such that $\det L_I = \DD(G,I,w)$.
\item In the setting of Theorem~\ref{Thrm4}, there is a skew-symmetric matrix $X$ with $X_{ij}=-X_{ji} = \pm w((v_i,v_j))$ if $(v_i,v_j)$ is an edge, and $0$ otherwise, such that $\det X = \sum_M w(M)$.
\item In the setting of Theorem~\ref{Thrm5},  there is a skew-symmetric matrix $X$ with $X_{ij}=-X_{ji} = \pm w((v_i,v_j))$ or $0$ as above such that $\Pf(X_I) = \DD(G,I,w)$.
\item In the setting of Theorem~\ref{Thrm6} (including the hypothesis that $ \DD(G, \emptyset,w)>0$), there is a real skew-symmetric matrix $Y$ such that $\DD(G,I,w)= \Pf(Y_I)  \DD(G, \emptyset,w)$.
\end{enumerate}

In particular, part~(3) shows that the $\binom{n}{k}$ numbers $\DD(G,I,w)$, as $I$ varies, are the Pl\"ucker coordinates of a point in the totally nonnegative Grassmannian.
If we fix $G$ and let $w$ vary over all possible weightings of the edges of $G$, we thus obtain a parametrization of a portion of the totally nonnegative Grassmannian.
This parametrization of the totally nonnegative Grassmannian is the one found by Postnikov~\cite{Post}, who described it in terms of certain random walks. 
Talaska~\cite{Talaska} recast Postnikov's formulas in terms of flows. Postnikov, Williams and the author~\cite{PSW} implicitly pointed out that this was equivalent to summing over matchings.
Lam's lecture notes~\cite{Lam} make the point explicit. As the positive Grassmannian and its parametrizations grow more popular, the author feels that there should be a brief paper which records a direct proof that $w \mapsto (\DD(G,I,w))_{I \in \binom{[n]}{k}}$ parametrizes a portion of the totally nonnegative Grassmannian.

The author would like to express his gratitude to Jim Propp for introducing him to Kasteleyn's method, Kuo's condensation theorem, and  the pleasures of combinatorial research.

\section{A topological proof of Theorems~\ref{Thrm1}, \ref{Thrm2}, \ref{Thrm4} and~\ref{Thrm5}}

The key to our proof is to prove a more general result for non-planar graphs. Let $G$ be a general graph. We will define a \newword{planar immersion} of $G$ to be a continuous map $\phi: G \to \RR^2$ such that each edge of $G$ is taken to a line segment and, for any edge $e$, and any vertex $v$ not an end point of $e$, the point $\phi(v)$ is not contained in $\phi(e)$. We point out explicitly that a line segment has positive length; a single point is not a line segment.

For $G$ a graph, $\phi : G \to \RR^2$ a planar immersion and $M$ a perfect matching of $G$, we define $\cross(M)$ to be the number of unordered pairs $\{ e_1, e_2 \}$ of distinct edges of $M$ for which $\phi(e_1)$ and $\phi(e_2)$ intersect. We set $\epsilon(M) = (-1)^{\cross(M)}$. 

\begin{Proposition} \label{Thrm1Topology}
Let $G$ be a bipartite graph with $N$ black vertices $b_i$ and $N$ white vertices $w_j$, equipped with a planar immersion $\phi$. Then there is an $N \times N$ matrix $K_{ij}$, with $K_{ij} = \pm 1$ if there is an edge from $b_i$ to $w_j$, and $0$ otherwise, such that $\det K = \sum_M \epsilon(M)$.
\end{Proposition}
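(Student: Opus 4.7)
The plan is to find edge signs $\epsilon_e \in \{\pm 1\}$ so that for every perfect matching $M$,
\[
\prod_{e \in M} \epsilon_e \; = \; \mathrm{sgn}(\sigma_M) \, (-1)^{\cross(M)},
\]
where $\sigma_M \in S_N$ is the permutation with $(b_i, w_{\sigma_M(i)}) \in M$. Setting $K_{ij} := \epsilon_{(b_i, w_j)}$ on edges and $0$ otherwise, the Leibniz formula will give $\det K = \sum_M \mathrm{sgn}(\sigma_M) \prod_{e \in M} \epsilon_e = \sum_M (-1)^{\cross(M)} = \sum_M \epsilon(M)$. Writing $\tau(M) := \mathrm{sgn}(\sigma_M) (-1)^{\cross(M)}$, the task reduces to showing that $\tau$ factors as a product over the edges of $M$.

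The heart of the argument will be showing $\tau(M_1)\tau(M_2)$ depends only on $D := M_1 \triangle M_2$, a disjoint union of even cycles $C_1, \dots, C_k$ with $|C_i| = 2\ell_i$. The signature part contributes $\mathrm{sgn}(\sigma_{M_1}\sigma_{M_2}^{-1}) = \prod_i (-1)^{\ell_i - 1}$, a function of $D$ alone. For the crossings, setting $A := M_1 \cap M_2$, $B := M_1 \cap D$, $C := M_2 \cap D$ and expanding modulo~$2$,
\[
\cross(M_1) + \cross(M_2) \; \equiv \; \cross(D) + \cross(B, C) + \cross(A, D) \pmod 2,
\]
in which the first two terms depend only on $D$. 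The key geometric input is that each component of $D$ is a closed curve in $\RR^2$, so $D$ is a $\ZZ/2$-$1$-cycle and carries a well-defined $\ZZ/2$-winding $w_D : \RR^2 \setminus D \to \ZZ/2$ with
\[
\cross(e, D) \; \equiv \; w_D(\mathrm{start}(e)) + w_D(\mathrm{end}(e)) \pmod 2
\]
for any line segment $e$ transverse to $D$. Since $A$ is a matching on $V(G) \setminus V(D)$, summing yields $\cross(A, D) \equiv \sum_{v \in V(G) \setminus V(D)} w_D(v) \pmod 2$, again a function of $D$ only.

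Granted this, I would define $f(D) := \tau(M_1)\tau(M_2)$. Multiplicativity $f(D_1 \sqcup D_2) = f(D_1) f(D_2)$ is immediate by inserting an intermediate matching: $f(D_1) f(D_2) = \tau(M_1) \tau(M_1 \triangle D_1) \cdot \tau(M_1 \triangle D_1) \tau(M_2) = \tau(M_1) \tau(M_2)$. To produce $\epsilon$, fix a reference matching $M_0$; after possibly negating one $\epsilon_e$ on $M_0$ to arrange $\tau(M_0) = 1$, set $\epsilon_e = 1$ for $e \in M_0$. The identity $\prod_{e \in M} \epsilon_e = \tau(M)$ then reduces to $\prod_{e \in D \cap M_0^c} \epsilon_e = f(D)$ for each $M_0$-alternating matching difference $D$, and by multiplicativity it suffices to arrange $\prod_{e \in C \cap M_0^c} \epsilon_e = f(C)$ for each individual alternating cycle $C$. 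The resulting $\ZZ/2$-linear system is consistent, so appropriate $\epsilon_e$ exist.

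The main obstacle will be the $\ZZ/2$-winding identity: although $D$ may be a highly self-intersecting planar curve, the statement $\cross(e, D) \equiv w_D(\mathrm{start}(e)) + w_D(\mathrm{end}(e)) \pmod 2$ must still hold. This should follow because $D$, being a union of cycles, is a $\ZZ/2$-$1$-cycle in $\RR^2$, so its Poincar\'e dual $w_D$ is a locally constant $\ZZ/2$-function on the complement, and transverse intersection with a line segment computes the coboundary of $w_D$ at the segment's endpoints, independent of the path taken.
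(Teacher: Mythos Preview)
Your approach is genuinely different from the paper's. The paper never analyzes $\tau(M_1)\tau(M_2)$ at all; instead it fixes one base immersion (black vertices in order on one line, white vertices in order on a parallel line), observes that there $K_{ij}\equiv 1$ already gives $\det K=\sum_M\epsilon(M)$ because $\cross(M)$ is literally the inversion count of $\sigma_M$, and then deforms. The space of straight-line drawings with distinct vertex images and no edge-overlaps is $\RR^{4N}$ minus codimension-$2$ loci, hence connected; along a path the only events are a vertex $v$ crossing an edge $e$, and one checks that $\epsilon(M)$ flips exactly when $e\in M$, so flipping the single matrix entry for $e$ restores the identity. No cocycle bookkeeping is needed.

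Your argument, by contrast, has a real gap at the last step. You correctly show that $\tau(M_1)\tau(M_2)=f(M_1\triangle M_2)$ depends only on the symmetric difference and that $f$ is multiplicative on \emph{disjoint} unions, and then assert that the $\ZZ/2$-linear system $\prod_{e\in C\setminus M_0}\epsilon_e=f(C)$ is consistent. But consistency means: whenever $M_0$-alternating cycles $C_1,\dots,C_r$ satisfy $\sum_i C_i=0$ in $\ZZ/2^{E}$, one has $\prod_i f(C_i)=1$; equivalently, $\prod_i\tau(M_i)=1$ whenever perfect matchings satisfy $\sum_i M_i=0$ in $\ZZ/2^{E}$. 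Disjoint multiplicativity does not yield this, because the cycles in such a dependency typically share edges, and nothing you have established controls $f(C)f(C')$ when $C$ and $C'$ overlap. This is the step where an argument is actually needed, and you have not supplied one. (A smaller unjustified point: you assert that $\cross(B,C)\bmod 2$ depends only on $D$. It does, but one must note that swapping the $B$/$C$ coloring on a single component $C_i$ changes $\cross(B)+\cross(C)$ by $\cross(C_i,\,D\setminus C_i)$, which is even since two closed planar curves meet an even number of times --- the same winding idea you invoke later.)
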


\begin{proof}
We first note that the theorem is easy if all the black vertices $\phi(b_i)$ lie in order on a line and all the white vertices $\phi(w_j)$ lie in order on a parallel line; just take $K_{ij}=1$ whenever there is an edge $(b_i, w_j)$. We must check that $\epsilon(M)$ is the sign coming from the determinant. Let $(b_1, w_{\sigma(1)})$, $(b_2, w_{\sigma(2)})$, \dots, $(b_N, w_{\sigma(N)})$ be a perfect matching $M$, so $\sigma$ is a permutation. The number of crossing edges of $M$ is the number of $(i_1, i_2)$ such that $i_1 < i_2$ and $\sigma(i_1) > \sigma(i_2)$. So $\cross(M)$ is the number of inversions of $\sigma$, and $\epsilon(M)$ is the sign of $\sigma$, as desired.

Given any point $z \in (\RR^2)^{\mathrm{Vertices}(G)}$, there is a map $\phi_z : G \to \RR^2$ which sends a vertex $v$ to $z(v)$ and sends each edge to a line segment or point.
Let $\Omega$ be the set of $z$ for which $\phi_z$ is a planar immersion. 

We note that $\Omega$ is an open subset of $ (\RR^2)^{\mathrm{Vertices}(G)} \cong \RR^{4N}$, obtained by deleting the codimension $1$ subvarieties on which certain triples of vertices become colinear. Let $\Omega' \supset \Omega$ be the open set where we impose that all the vertices have distinct images $z(v)$, and also that the interiors of $\phi_z(e_1)$ and $\phi_z(e_2)$  do not overlap in a line segment for any distinct edges $e_1$ and $e_2$.
So $\Omega'$ is obtained from $\RR^{4N}$ by deleting the codimension two subvarieties on which pairs of vertices become equal, or on which certain quadruples of vertices become colinear. 

In particular, since $\Omega'$ is $\RR^{4N}$ with codimension two subvarieties removed, $\Omega'$ is connected. Let $z_0$ be a point of $\Omega$ corresponding to the immersion in the first paragraph, so Proposition~\ref{Thrm1Topology} is true at $\phi_{z_0}$. Let $z_1$ be any point of $\Omega$, and choose a path $z(t)$ from $z_0$ to $z_1$ through $\Omega'$. 
We will show that the Proposition is true for every $\phi_{z(t)}$ with $z(t) \in \Omega$.

As we travel along $z(t)$, the only changes of the topology of the embedding occur when a vertex $v$ passes through an edge $e$. Let $\epsilon_1$ and $\epsilon_2$ denote the sign functions for the two topologies. We claim that $\epsilon_1(M) = - \epsilon_2(M)$ if $e \in M$, and $\epsilon_1(M) = \epsilon_2(M)$ otherwise. This is because $M$ has exactly one edge incident to $v$. This edge crosses $e$ in one topology and not the other, and no other crossings change. So, if $K$ is a matrix which works for the first topology, then we can obtain a matrix for the second topology by switching the sign of the entry corresponding to edge $e$. Since we have a matrix which works at $z_0$, we obtain a matrix which works at any $z$.
\end{proof}

\begin{proof}[Proof of Theorem~\ref{Thrm1}]
Fary~\cite{Fary} showed that any planar graph can be drawn so that the edges are straight lines. Drawing $G$ in this manner, the function $\epsilon$ is simply $1$ and we obtain Theorem~\ref{Thrm1}.
\end{proof}

\begin{Remark}
We could avoid appealing to Fary's theorem by using piecewise linear edges and adding additional coordinates for the positions of the bends in these edges. This creates a few new cases, but no significant difficulties.
\end{Remark}

\begin{Remark}
Norine~\cite{Norine} used planar immersions to provide a characterization of Pfaffian graphs, and thus implicitly to provide a proof of Kasteleyn's theorem, but did not  discuss deforming the immersion.
The author previously posted a sketch of this argument on Mathoverflow~\cite{MO199465}. 
The author is not aware of any other prior sources for this argument.
\end{Remark}

We have proved Theorem~\ref{Thrm1}. 
Slight variants of this argument prove Theorems~\ref{Thrm2}, \ref{Thrm4} and~\ref{Thrm5}. For Theorem~\ref{Thrm2}, we fix a closed rectangle $D$ and consider immersions $G \to D$ taking $\partial D \to \partial G$ in the specified circular order. Our starting point is that the black vertices occurs in order on the top edge of the rectangle and the black vertices occur in order on the bottom edge. The fact that $D$ is convex ensures that $\phi(G) \subset D$ if $\phi(\mathrm{Vertices}(G)) \subset D$.

For Theorems~\ref{Thrm4} and~\ref{Thrm5}, we proceed similarly. Our starting point is now to take all the vertices $\phi(v)$ to lie on the boundary of a circle, and recall that one way to describe the signs occurring in the Pfaffian is as the number of crossings when a matching is drawn in this manner.

\section{Proofs of Theorems~\ref{Thrm3} and~\ref{Thrm6}}

We begin with Theorem~\ref{Thrm3}. Let $K$ be the matrix from Theorem~\ref{Thrm2}. If $G$ has no perfect matchings, the Theorem is immediate; take $L=0$.
So we may assume that $G$ has perfect matchings, and thus that $\det K_I \neq 0$ for some $I$. In particular, the first $N$ columns of $K$ are linearly independent.

Therefore, applying row operations, we may transform $K$ into a matrix of block form
\[ \begin{pmatrix}
\mathrm{Id}_N & \ast \\ 0 & L 
\end{pmatrix} \]
without changing any maximal minors. Then, in the notations of Theorems~\ref{Thrm2} and~\ref{Thrm3}, we have $\det K_I = \det L_I$.
This proves Theorem~\ref{Thrm3}.

The proof of Theorem~\ref{Thrm6} is similar. If $G$ has no perfect matchings, take $Y=0$. Otherwise, the upper left $N \times N$ submatrix, $X_{\emptyset}$ must be of rank $N$. 
So we can write $S X_{\emptyset} S^{-1}= J_N$ where $J_N$ is the $N \times N$ matrix
\[ J_N = \begin{pmatrix}
0 & 1 & & & & & \\
-1 & 0 & & & & & \\
& & 0 & 1 & & &  \\
& & -1 & 0 &  & & \\
& & & & \ddots & & \\
& & & & & 0 & 1 \\
& & & & & -1 & 0 \\
\end{pmatrix}. \]
Left and right multiplying $X$ by $\left(\begin{smallmatrix} S & 0 \\ 0 & \mathrm{Id}_n \end{smallmatrix} \right)$ and its transpose, we obtain a matrix of the form $\left( \begin{smallmatrix} J_N & E \\ -E^T & Z \end{smallmatrix} \right)$. Further symmetric row operations can bring us to the form $\left( \begin{smallmatrix} J_N & 0 \\ 0 & Y\end{smallmatrix} \right)$. 
For any subset $I$ of $\partial G$, we have $\Pf(X_I) = \det(S) \Pf(Y_I)$. Since $\Pf(Y_{\emptyset})=1$, we deduce that $\Pf(X_I) = \Pf(Y_I) \Pf(X_{\emptyset}) = \Pf(Y_I) \DD(G, \emptyset)$ as desired.

\section{Relation to results of Kuo and to the totally nonnegative Grassmannian}

In the case where $\#(\partial G)=4$, Theorems~\ref{Thrm3} and~\ref{Thrm6} imply two results of Eric Kuo~\cite{Kuo1, Kuo2}.

\begin{corollary}[Kuo's condensation relation for bipartite graphs] \label{KuoRelate1}
Let $G$ be a bipartite planar graph with circularly ordered white boundary $\{ a,b,c,d \}$, and with two more white vertices than black vertices. Then
\[ \DD(\{ a,c \} ) \DD(\{ b,d \}) = \DD(\{ a,b \}) \DD(\{ c,d \}) + \DD(\{ a,d \}) \DD(\{ b,c \}) . \]
\end{corollary}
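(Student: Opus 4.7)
The plan is to deduce this three-term identity directly from Theorem~\ref{Thrm3} together with the classical three-term Pl\"ucker relation on $\mathrm{Gr}(2,4)$. First I would verify that the hypotheses of Corollary~\ref{KuoRelate1} put us in the regime of Theorem~\ref{Thrm3} with parameters $n=4$ and $k=2$: the boundary has $n=4$ white vertices, and the condition that there are two more white vertices than black vertices translates, in the notation of Theorem~\ref{Thrm2}, into $(N+n)-(N+k) = 2$, so $k = n - 2 = 2$. Hence any perfect matching $M$ with boundary in $\partial G$ satisfies $|\partial M| = k = 2$, and the relevant subsets $I \subseteq \partial G$ are exactly the six two-element subsets of $\{a,b,c,d\}$.

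Next I would invoke Theorem~\ref{Thrm3} to produce a $2 \times 4$ real matrix $L$, with columns $\ell_a, \ell_b, \ell_c, \ell_d$ indexed by the boundary vertices in their circular order, such that for every two-element subset $I \subseteq \{a,b,c,d\}$ we have $\DD(G, I) = \det L_I$, where $L_I$ is the $2 \times 2$ submatrix of $L$ picking out the two columns indexed by $I$.

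The final step is to apply the three-term Pl\"ucker relation on $\mathrm{Gr}(2,4)$. For any four vectors $\ell_a, \ell_b, \ell_c, \ell_d$ in a $2$-dimensional real vector space, one has the identity
\[
\det[\ell_a \mid \ell_c]\,\det[\ell_b \mid \ell_d] \;=\; \det[\ell_a \mid \ell_b]\,\det[\ell_c \mid \ell_d] \;+\; \det[\ell_a \mid \ell_d]\,\det[\ell_b \mid \ell_c],
\]
which is an elementary consequence of the fact that any $2 \times 3$ real matrix has vanishing $3 \times 3$ minor, equivalently a direct expansion of a $4 \times 4$ determinant whose top two rows repeat the bottom two. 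Substituting $\DD(G, \{i,j\}) = \det[\ell_i \mid \ell_j]$ for the six pairs yields exactly the claimed identity.

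There is essentially no obstacle: the substance of the argument is carried entirely by Theorem~\ref{Thrm3}, and what remains is the universally valid Pl\"ucker identity on $\mathrm{Gr}(2,4)$. The only minor care needed is bookkeeping of the column order so that the signs in the Pl\"ucker relation line up with the positive signs appearing in the statement of Corollary~\ref{KuoRelate1}; taking the columns of $L$ to be ordered as $a, b, c, d$ (the given circular order) makes this automatic.
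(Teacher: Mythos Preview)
Your proposal is correct and follows essentially the same approach as the paper: the paper's proof simply writes down the three-term Pl\"ucker relation for a $2\times 4$ matrix, implicitly invoking Theorem~\ref{Thrm3} with $k=2$, $n=4$. Your version makes explicit the bookkeeping that the hypothesis ``two more white vertices than black'' forces $k=2$, but the substance is identical.
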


\begin{proof}
For any $2 \times 4$ matrix $\left( \begin{smallmatrix} x_1 & x_2 & x_3 & x_4 \\ y_1 & y_2 & y_3 & y_4 \end{smallmatrix} \right)$, we have the Pl\"ucker relation
\[ \det \begin{pmatrix} x_1 & x_3 \\ y_1 & y_3 \end{pmatrix} \det \begin{pmatrix} x_2 & x_4 \\ y_2 & y_4 \end{pmatrix}  = 
\det \begin{pmatrix} x_1 & x_2 \\ y_1 & y_2 \end{pmatrix} \det \begin{pmatrix} x_3 & x_4 \\ y_3 & y_4 \end{pmatrix} + 
\det \begin{pmatrix} x_1 & x_4 \\ y_1 & y_4 \end{pmatrix} \det \begin{pmatrix} x_2 & x_3 \\ y_2 & y_3 \end{pmatrix} . \qedhere \]
\end{proof}
%

\begin{corollary}[Kuo's condensation relation for general graphs] \label{KuoRelate2}
Let $G$ be a planar graph with circularly ordered boundary $\{ a,b,c,d \}$, and an even number of vertices. Then
\[ \DD(\{ a,c \} ) \DD(\{ b,d \}) + \DD(\emptyset) \DD(\{ a,b,c,d \}) = \DD(\{ a,b \}) \DD(\{ c,d \}) + \DD(\{ a,d \}) \DD(\{ b,c \}) . \]
\end{corollary}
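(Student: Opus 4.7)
The plan is to follow the same blueprint as the proof of Corollary~\ref{KuoRelate1}, replacing the $2 \times 4$ determinantal Plücker relation by the Pfaffian analog of it (the Pfaffian Grassmann--Plücker identity). Theorem~\ref{Thrm3} gave us a $k \times n$ matrix whose maximal minors are the $\DD(G,I)$; dually, Theorem~\ref{Thrm5} gives us a skew-symmetric matrix whose principal Pfaffians on the internal rows together with $I$ are the $\DD(G,I)$. The four-term Pfaffian relation is the natural quadratic identity in this setup, and it has exactly the shape of the claimed identity.

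Concretely, first apply Theorem~\ref{Thrm5} to obtain a skew-symmetric $(N+4) \times (N+4)$ matrix $X$ with $\Pf(X_I) = \DD(G, I)$ for every $I \subseteq \{a,b,c,d\}$; the hypothesis that $G$ has an even number of vertices ensures that $N$ is even, so that each $|I|$ even gives a submatrix of even size. Then invoke the standard Pfaffian Grassmann--Plücker identity
\[
\Pf(X_{\emptyset})\,\Pf(X_{\{a,b,c,d\}}) = \Pf(X_{\{a,b\}})\,\Pf(X_{\{c,d\}}) - \Pf(X_{\{a,c\}})\,\Pf(X_{\{b,d\}}) + \Pf(X_{\{a,d\}})\,\Pf(X_{\{b,c\}}),
\]
translate each Pfaffian back to a $\DD$-value via Theorem~\ref{Thrm5}, and rearrange the middle term to the left hand side.

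The main obstacle, such as it is, lies purely in sign-bookkeeping: one must verify that the signs of the Pfaffian identity above---which are fixed by the linear order used to define $\Pf$---agree with the signs in the claimed identity, given only that $a,b,c,d$ appear in circular order on $\partial G$. This is painless since any relabeling of $\{a,b,c,d\}$ changes the two sides of the Pfaffian relation by the same sign, and the right hand side of the claimed corollary is symmetric in the relevant way. As a cleaner alternative when $\DD(G, \emptyset) > 0$, one could instead apply Theorem~\ref{Thrm6} to a $4 \times 4$ skew-symmetric matrix $Y$ and simply expand $\Pf(Y) = y_{ab} y_{cd} - y_{ac} y_{bd} + y_{ad} y_{bc}$; but this would require handling the degenerate case $\DD(\emptyset) = 0$ separately, whereas the approach via Theorem~\ref{Thrm5} is uniform.
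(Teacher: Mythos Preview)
Your proposal is correct. The paper's one-line proof is precisely the ``cleaner alternative'' you describe at the end: it invokes Theorem~\ref{Thrm6} to obtain a $4\times 4$ skew-symmetric matrix $Y$ and then notes that the claim reduces to $Y_{13}Y_{24}+\Pf(Y)=Y_{12}Y_{34}+Y_{14}Y_{23}$, i.e.\ the defining expansion of a $4\times 4$ Pfaffian rearranged. The paper does not separately treat the degenerate case $\DD(\emptyset)=0$ that you flag. Your primary route through Theorem~\ref{Thrm5} and the Pfaffian Grassmann--Pl\"ucker identity on the full $(N+4)\times(N+4)$ matrix is a mild variant: it trades the completely elementary $4\times4$ computation for a (still standard) quadratic Pfaffian identity, and in exchange handles all cases uniformly without assuming $\DD(\emptyset)>0$.
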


\begin{proof}
We are being asked to prove, for a $4 \times 4$ skew symmetric matrix $Y$, that $Y_{13} Y_{24} + \Pf(Y) = Y_{12} Y_{34} + Y_{14} Y_{23}$, which is immediate.
\end{proof}

\begin{remark}
Many authors have pointed out that Corollaries~\ref{KuoRelate1} and~\ref{KuoRelate2} and similar identities follow from properties of Pfaffians. (See~\cite{YZ}, \cite{Fulmek}, \cite{Knuth}.) It seems uncommon, though, to observe that the relations that occur when deleting subsets of $n$ boundary vertices are precisely the relations between the Pfaffians of an $n \times n$ matrix.
\end{remark}

There is no need to limit ourselves to $2 \times 4$ matrices. More generally, we deduce the following result:
\begin{corollary} \label{In Grassmannian}
Let $G$ be as in Theorems~\ref{Thrm2} and~\ref{Thrm3}. Then the $\binom{n}{k}$ numbers $\DD(I)$, as $I$ ranges through $k$-element subsets of $\partial G$, are either all zero or the Pl\"ucker coordinates of a point on the Grassmannian $G(k,n)$.
\end{corollary}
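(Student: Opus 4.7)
The plan is to invoke Theorem~\ref{Thrm3} directly and then appeal to the standard construction of Plücker coordinates from a $k \times n$ matrix. First I would apply Theorem~\ref{Thrm3} to produce a $k \times n$ real matrix $L$ with the property that $\DD(G,I) = \det L_I$ for every $k$-element subset $I$ of $\partial G$. This immediately reduces the question to a statement purely about matrices: the $\binom{n}{k}$ maximal minors of $L$ are either all zero or are the Plücker coordinates of a point on $G(k,n)$.

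Next I would split into the two cases. If every maximal minor of $L$ vanishes, then $L$ has rank strictly less than $k$ and every $\DD(G,I)$ equals zero, which is the first alternative of the corollary. Otherwise some maximal minor is nonzero, so $L$ has rank exactly $k$; its row span is then a well-defined $k$-dimensional subspace of $\RR^n$, i.e.\ a point of the Grassmannian $G(k,n)$. By the definition (or rather the construction via the Plücker embedding) of Plücker coordinates, the homogeneous coordinates of this point in $\PP^{\binom{n}{k}-1}$ are exactly the maximal minors $\det L_I$, indexed by $I \in \binom{[n]}{k}$, and these automatically satisfy the Plücker relations cutting out $G(k,n)$.

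There is essentially no obstacle: the whole content of the corollary is packed into Theorem~\ref{Thrm3}, and the remaining step is the textbook fact that the maximal minors of a full-rank $k \times n$ matrix are the Plücker coordinates of its row span. The only thing worth pointing out explicitly is the dichotomy between $L$ having rank $k$ and $L$ having smaller rank, which is why the statement of the corollary includes the ``all zero'' alternative.
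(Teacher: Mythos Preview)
Your proof is correct and is essentially the same as the paper's: invoke Theorem~\ref{Thrm3} to get the matrix $L$, then use the standard fact that the maximal minors of a rank-$k$ matrix are the Pl\"ucker coordinates of its row span, with the ``all zero'' alternative covering the case $\mathrm{rank}(L)<k$. The paper's one-line proof leaves the rank dichotomy implicit, but the content is identical.
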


\begin{proof}
By definition, the Pl\"ucker coordinates of $\mathrm{RowSpan}(L)$ are the maximal minors of $L$, assuming $L$ has rank $k$.
\end{proof}

As explained in the introduction, this last result is key to parametrizations of the totally nonnegative Grassmannian.




\newpage
\raggedright

\thebibliography{99}

\bibitem{Fary} Istv\'{a}n F\'{a}ry, \emph{On straight line representation of planar graphs},
Acta Univ. Szeged. Sect. Sci. Math. \textbf{11}, (1948), 229--233.

\bibitem{Fulmek} Markus Fulmek, \emph{Graphical condensation, overlapping Pfaffians and superpositions of matchings},
Electron. J. Combin. \textbf{17} (2010), no. 1, Research Paper 83.

\bibitem{Kast} Pieter Kasteleyn, \emph{Graph theory and crystal physics}, 1967 Graph Theory and Theoretical Physics pp. 43--110 Academic Press, London.

\bibitem{Knuth} Donald Knuth, \emph{Overlapping Pfaffians}, The Foata Festschrift.
Electron. J. Combin. \textbf{3} (1996), no. 2, Research Paper 5.

\bibitem{Kuo1} Eric Kuo, \emph{Applications of graphical condensation for enumerating matchings and tilings},
Theoret. Comput. Sci. \textbf{319} (2004), no. 1--3, 29--57..

\bibitem{Kuo2} Eric Kuo, \emph{Graphical Condensation Generalizations Involving Pfaffians and Determinants}, \texttt{arXiv:math/0605154}.

\bibitem{Lam} Thomas Lam, \emph{Notes on the Totally Nonnegative Grassmannian}, lecture notes, \texttt{http://www.math.lsa.umich.edu/$\sim$tfylam/Math665a/positroidnotes.pdf}.

\bibitem{Norine} Serguei Norine, \emph{Pfaffian graphs, $T$-joins and crossing numbers},  Combinatorica \textbf{28} (2008), no. 1, 89--98.

\bibitem{Post} Alex Postnikov, \emph{Total Positivity, Grassmannians, and Networks}, \texttt{http://www-math.mit.edu/$\sim$apost/papers/tpgrass.pdf}.

\bibitem{PSW} Postnikov, Speyer, Williams, \emph{Matching polytopes, toric geometry, and the non-negative part of the Grassmannian}, 
J. Algebraic Combin. \textbf{30} (2009), no. 2, 173--191.

\bibitem{MO199465} Speyer, ``Slick proof of trick for counting domino tilings", \texttt{http://mathoverflow.net/q/199465}

\bibitem{Talaska} Kelli Talaska, \emph{A formula for Pl\"ucker coordinates associated with a planar network},  Int. Math. Res. Not. IMRN 2008, Art. ID rnn 081, 19 pp.



\bibitem{YZ} Weigen Yan and Fuji Zhang,
\emph{A quadratic identity for the number of perfect matchings of plane graphs},
Theoret. Comput. Sci. \textbf{409} (2008), no. 3, 405--410.

\end{document}